\documentclass[10pt]{amsart}
\usepackage{amssymb,MnSymbol}
\usepackage{amsthm,amsmath}

\title[On the generalized associativity equation]{On the generalized associativity equation}\thanks{Corresponding author: Jean-Luc Marichal is with the Mathematics Research Unit, University of Luxembourg, 6, rue Coudenhove-Kalergi, L-1359 Luxembourg, Luxembourg. Email: jean-luc.marichal[at]uni.lu}

\author{Jean-Luc Marichal}
\address{Mathematics Research Unit, FSTC, University of Luxembourg, 6, rue Coudenhove-Kalergi, L-1359 Luxembourg, Luxembourg}
\email{jean-luc.marichal[at]uni.lu}

\author{Bruno Teheux}
\address{Mathematics Research Unit, FSTC, University of Luxembourg, 6, rue Coudenhove-Kalergi, L-1359 Luxembourg, Luxembourg}
\email{bruno.teheux[at]uni.lu}

\date{July 28, 2016}

\begin{document}

\theoremstyle{plain}
\newtheorem{theorem}{Theorem}
\newtheorem{lemma}[theorem]{Lemma}
\newtheorem{proposition}[theorem]{Proposition}
\newtheorem{corollary}[theorem]{Corollary}
\newtheorem{fact}[theorem]{Fact}
\newtheorem{problem}{Problem}
\newtheorem*{main}{Main Theorem}

\theoremstyle{definition}
\newtheorem{definition}[theorem]{Definition}
\newtheorem{example}[theorem]{Example}

\theoremstyle{remark}
\newtheorem*{conjecture}{onjecture}
\newtheorem{remark}{Remark}
\newtheorem{claim}{Claim}

\renewcommand{\S}{\mathcal{S}}

\newcommand{\N}{\mathbb{N}}                     
\newcommand{\R}{\mathbb{R}}                     
\newcommand{\bfx}{\mathbf{x}}
\newcommand{\bfy}{\mathbf{y}}
\newcommand{\bfz}{\mathbf{z}}
\newcommand{\dom}{\mathrm{dom}}
\newcommand{\ran}{\mathrm{ran}}
\newcommand{\id}{\mathrm{id}}

\begin{abstract}
The so-called generalized associativity functional equation
$$
G(J(x,y),z) ~=~ H(x,K(y,z))
$$
has been investigated under various assumptions, for instance when the unknown functions $G$, $H$, $J$, and $K$ are real, continuous, and strictly monotonic in each variable. In this note we investigate the following related problem: given the functions $J$ and $K$, find every function $F$ that can be written in the form
$$
F(x,y,z) ~=~ G(J(x,y),z) ~=~ H(x,K(y,z))
$$
for some functions $G$ and $H$. We show how this problem can be solved when any of the inner functions $J$ and $K$ has the same range as one of its sections.
\end{abstract}

\keywords{Generalized associativity, functional equation, quasi-inverse.}

\subjclass[2010]{39B52}

\maketitle

\section{Introduction}

Let $X$, $Y$, $Z$, $U_J$, $U_K$, and $U$ be nonempty sets and consider the functional equation
\begin{equation}\label{eq:GAE-286we}
G(J(x,y),z) ~=~ H(x,K(y,z)),\qquad x\in X,~y\in Y,~z\in Z,
\end{equation}
where $J\colon X\times Y\to U_J$, $K\colon Y\times Z\to U_K$, $G\colon U_J\times Z\to U$, and $H\colon X\times U_K\to U$ are unknown functions. This functional equation, called the \emph{generalized associativity equation}, has been investigated under various solvability conditions, in particular when the unknown functions are real, continuous, and strictly monotonic in each variable (see, e.g, \cite{Acz66,Mak05} and the references therein).

In this paper we are interested in the following problem, which is closely related to that of solving the generalized associativity equation \eqref{eq:GAE-286we}. Throughout this paper we denote the domain and range of any function $f$ by $\dom(f)$ and $\ran(f)$, respectively.

\begin{problem}\label{prob:1}
Given two functions $J\colon X\times Y\to U_J$ and $K\colon Y\times Z\to U_K$, determine the class $\mathcal{F}_{J,K}$ of functions $F\colon X\times Y\times Z\to\ran(F)$ for which there exist $G\colon U_J\times Z\to\ran(F)$ and $H\colon X\times U_K\to\ran(F)$ such that
\begin{equation}\label{eq:GAE-286we2}
F(x,y,z) ~=~ G(J(x,y),z) ~=~ H(x,K(y,z)),\qquad x\in X,~y\in Y,~z\in Z.
\end{equation}
\end{problem}

Contrary to the problem of solving the generalized associativity equation, here we assume that the inner functions $J$ and $K$ are given beforehand and we search for all functions $F$ which have the form given in \eqref{eq:GAE-286we2}. For instance, searching for the real functions $F\colon\R^3\to\R$ that can be expressed in the form
$$
F(x,y,z) ~=~ G(x-y,z) ~=~ H(x,y-z),\qquad x,y,z\in\R,
$$
for some functions $G,H\colon\R^2\to\R$ is a special case of Problem~\ref{prob:1}. As we will see in Example~\ref{ex:xz-y}, these functions are all of the form
$$
F(x,y,z) ~=~ f(x-y+z),
$$
where $f\colon\R\to\R$ is an arbitrary function.

The following problem gives an equivalent but simpler reformulation of Problem~\ref{prob:1}, where the functions $G$ and $H$ are not explicitly involved.

\begin{problem}\label{prob:2}
Given two functions $J\colon X\times Y\to U_J$ and $K\colon Y\times Z\to U_K$, determine the class $\mathcal{F}_{J,K}$ of functions $F\colon X\times Y\times Z\to\ran(F)$ satisfying the conditions
\begin{eqnarray*}
J(x,y) ~=~ J(x',y') & ~~\Rightarrow ~~ & F(x,y,z) ~=~ F(x',y',z)\\
K(y,z) ~=~ K(y',z') & ~~\Rightarrow ~~ & F(x,y,z) ~=~ F(x,y',z')
\end{eqnarray*}
for all $x,x'\in X$, all $y,y'\in Y$, and $z,z'\in Z$.
\end{problem}

It is easy to see that Problem~\ref{prob:1} and Problem~\ref{prob:2} are equivalent in the sense that they define the same class $\mathcal{F}_{J,K}$ of functions. We also observe that $\mathcal{F}_{J,K}$ is never empty since it contains all the constant functions. More generally, we have the following fact.

\begin{fact}\label{fact:fF}
If $F\in\mathcal{F}_{J,K}$, then $f\circ F\in\mathcal{F}_{J,K}$ for every function $f$ defined on $\ran(F)$.
\end{fact}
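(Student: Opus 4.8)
The plan is to use the reformulation provided by Problem~\ref{prob:2}, which characterizes membership in $\mathcal{F}_{J,K}$ purely in terms of the two implications relating equalities of $J$-values and $K$-values to equalities of $F$-values. Since post-composition with a function preserves equalities, the conclusion should follow almost immediately.

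First I would fix $F\in\mathcal{F}_{J,K}$ and an arbitrary function $f$ with $\dom(f)=\ran(F)$, and observe that $f\circ F$ is then well defined on $X\times Y\times Z$ and, by construction, maps onto $\ran(f\circ F)$, so it is a legitimate candidate for membership in $\mathcal{F}_{J,K}$. Next, for all $x,x'\in X$, $y,y'\in Y$, $z\in Z$ with $J(x,y)=J(x',y')$, the first implication of Problem~\ref{prob:2} gives $F(x,y,z)=F(x',y',z)$, and applying $f$ to both sides yields $(f\circ F)(x,y,z)=(f\circ F)(x',y',z)$. The argument for the second implication, starting from $K(y,z)=K(y',z')$, is identical. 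Hence $f\circ F$ satisfies both conditions of Problem~\ref{prob:2}, i.e., $f\circ F\in\mathcal{F}_{J,K}$.

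Alternatively, one can argue directly from the form \eqref{eq:GAE-286we2}: if $F(x,y,z)=G(J(x,y),z)=H(x,K(y,z))$, then $(f\circ F)(x,y,z)=(f\circ G)(J(x,y),z)=(f\circ H)(x,K(y,z))$, so $f\circ F\in\mathcal{F}_{J,K}$ with $G$ and $H$ replaced by $f\circ G$ and $f\circ H$. Since the statement is elementary, there is no real obstacle; the only point requiring a moment's care is that $\dom(f)$ must contain $\ran(F)$ for $f\circ F$ to be defined, which is precisely the hypothesis, and the codomain condition is vacuous because every function surjects onto its own range.
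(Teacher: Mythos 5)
Your proof is correct; the paper states Fact~\ref{fact:fF} without proof, treating it as immediate, and both of your arguments (preservation of the implications in Problem~\ref{prob:2} under post-composition, or replacing $G,H$ by $f\circ G,f\circ H$ in \eqref{eq:GAE-286we2}) are valid one-line justifications of exactly the intended kind.
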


%
%

Solving Problem~\ref{prob:1}, or equivalently Problem~\ref{prob:2}, seems not easy in general. However, solutions can be found as soon as certain assumptions are made on the functions $J$ and $K$. In Section 2 we show how this problem can be solved whenever any of the functions $J$ and $K$ has the same range as one of its sections.
In Section 3 we focus on the special case where $X=Z=A$ and $Y=A^{n-2}$ for some nonempty set $A$ and some integer $n\geqslant 3$ (in which case any function in $\mathcal{F}_{J,K}$ is defined on the Cartesian power $A^n$) and we provide conditions on $J$ and $K$ for the functions in $\mathcal{F}_{J,K}$ to be expressible in terms of their diagonal sections (i.e., every $F\in\mathcal{F}_{J,K}$ is of the form $F=\delta_F\circ M$ for some function $M\colon A^n\to A$, where $\delta_F\colon A\to A$ is defined by $\delta_F(x)=F(x,\ldots,x)$).

We use the following notation. The identity function on any nonempty set $E$ is denoted by $\id_E$. We denote the set of quasi-inverses of a function $f$ by $Q(f)$, where a quasi-inverse $g$ of a function $f$ is defined by the conditions (see, e.g., \cite[Sect.~2.1]{SchSkl83})
$$
f\circ g|_{\ran(f)}=\id_{\ran(f)}\qquad\mbox{and}\qquad\ran(g|_{\ran(f)})=\ran(g).
$$
Throughout this paper we assume that every function has at least one quasi-inverse. It is well known that this assumption is equivalent to the Axiom of Choice. Recall also that the relation of being quasi-inverse is symmetric: if $g \in Q(f)$ then $f \in Q(g)$; moreover, we have $\ran(g)\subseteq\dom(f)$ and $\ran(f)\subseteq\dom(g)$ and the functions $f|_{\ran(g)}$ and $g|_{\ran(f)}$ are one-to-one (in particular if $\ran(g)=\dom(f)$ and $\ran(f)=\dom(g)$, then $f$ and $g$ are inverses of each other).


\begin{fact}\label{fact:11}
If $g\in Q(f)$ and $\ran(h)\subseteq\ran(f)$, then $f\circ g\circ h=h$.
\end{fact}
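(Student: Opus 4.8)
The plan is to verify the identity $f\circ g\circ h=h$ pointwise on $\dom(h)$, using only the defining properties of a quasi-inverse together with the basic domain/range inclusions recalled just above the statement.

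First I would check that the composition $f\circ g\circ h$ even makes sense. For any $t\in\dom(h)$ we have $h(t)\in\ran(h)\subseteq\ran(f)$, and since $\ran(f)\subseteq\dom(g)$ the value $g(h(t))$ is defined; moreover $g(h(t))\in\ran(g)\subseteq\dom(f)$, so $f(g(h(t)))$ is defined as well. Hence $\dom(f\circ g\circ h)=\dom(h)$, and it remains only to compare the values.

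Next I would fix $t\in\dom(h)$ and put $u=h(t)$. Because $u\in\ran(h)\subseteq\ran(f)$, the first condition in the definition of a quasi-inverse, namely $f\circ g|_{\ran(f)}=\id_{\ran(f)}$, applies to $u$ and gives $f(g(u))=u$. Substituting back, $(f\circ g\circ h)(t)=f(g(h(t)))=h(t)$. Since $t\in\dom(h)$ was arbitrary, this yields $f\circ g\circ h=h$.

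There is essentially no obstacle here: the only step that requires a moment of care is checking that each composition in the chain $f\circ g\circ h$ is well-defined, and this is immediate from the inclusions $\ran(f)\subseteq\dom(g)$ and $\ran(g)\subseteq\dom(f)$ recalled above. Note in particular that the second defining property of a quasi-inverse (the range condition $\ran(g|_{\ran(f)})=\ran(g)$) plays no role in this fact.
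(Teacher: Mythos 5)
Your proof is correct: the pointwise verification via $h(t)\in\ran(h)\subseteq\ran(f)$ and the identity $f\circ g|_{\ran(f)}=\id_{\ran(f)}$ is exactly the intended argument, which the paper leaves implicit by stating Fact~\ref{fact:11} without proof. Your added care about well-definedness of the compositions and the observation that the second quasi-inverse condition is not needed are both accurate.
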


\begin{remark}
Consider the class $\mathcal{F}_{J,K}$ as defined in Problem~\ref{prob:1} and let $F\in\mathcal{F}_{J,K}$. Then we have $G(a,b) = F(\phi(a),b)$ for every $\phi\in Q(J)$ and every $(a,b)\in\ran(J)\times Z$. Therefore, $G$ is completely determined from $F$. Similarly, we have $H(a,b)=F(a,\psi(b))$ for every $\psi\in Q(K)$ and every $(a,b)\in X\times\ran(K)$, and hence $H$ is completely determined from $F$. Thus, when such quasi-inverses $\phi$ and $\psi$ can be given explicitly, Problem~\ref{prob:1} amounts to solving the generalized associativity equation \eqref{eq:GAE-286we} for given functions $J$ and $K$ and an arbitrary set $U$.
\end{remark}

\section{Main results}

For any $a\in Z$ we define the section $K_2^a\colon Y\to U_K$ of $K$ as the function $K_2^a(y)=K(y,a)$. The following theorem provides a first step in the resolution of Problem~\ref{prob:1} whenever $\ran(K)=\ran(K_2^a)$ for some $a\in Z$.

\begin{theorem}\label{thm:main1}
Assume that $\ran(K)=\ran(K_2^a)$ for some $a\in Z$ and let $F\in \mathcal{F}_{J,K}$. Then there exists $f\colon U_J\to\ran(F)$ such that $F=f\circ R_k$ for every $k\in Q(K_2^a)$, where $R_k\colon X\times Y\times Z\to U_J$ is defined by
$$
R_k(x,y,z) ~=~ J(x,k\circ K(y,z)).
$$
\end{theorem}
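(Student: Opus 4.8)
The plan is to use the hypothesis $\ran(K)=\ran(K_2^a)$ to replace the variable $z$ everywhere by the fixed value $a$. Fix $k\in Q(K_2^a)$. Since $\ran(K)=\ran(K_2^a)$, Fact~\ref{fact:11} applied to the functions $K_2^a$ and $k$ (with $h=K$) gives $K_2^a\circ k\circ K=K$, that is,
$$
K\bigl(k(K(y,z)),a\bigr) ~=~ K(y,z),\qquad y\in Y,~z\in Z.
$$
Because $F\in\mathcal{F}_{J,K}$, the second implication in Problem~\ref{prob:2} then yields
$$
F(x,y,z) ~=~ F\bigl(x,k(K(y,z)),a\bigr),\qquad x\in X,~y\in Y,~z\in Z.
$$

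Next I would introduce $g\colon X\times Y\to\ran(F)$ by $g(x,y)=F(x,y,a)$, so that the last identity reads $F(x,y,z)=g(x,k(K(y,z)))$. By the first implication in Problem~\ref{prob:2}, $g(x,y)$ depends on $(x,y)$ only through $J(x,y)$; hence, choosing any $\phi\in Q(J)$ and setting $f(u)=g(\phi(u))$ for $u\in\ran(J)$ (and extending $f$ arbitrarily to a map $U_J\to\ran(F)$), one gets $g=f\circ J$ on $X\times Y$, since $J(\phi(J(x,y)))=J(x,y)$ by definition of quasi-inverse and $g$ factors through $J$. Combining this with the previous display gives
$$
F(x,y,z) ~=~ f\bigl(J(x,k(K(y,z)))\bigr) ~=~ f\bigl(R_k(x,y,z)\bigr),
$$
i.e.\ $F=f\circ R_k$.

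The point worth emphasizing is that $f$ was constructed only from the function $(x,y)\mapsto F(x,y,a)$ and from a quasi-inverse of $J$, so it does not depend on the choice of $k\in Q(K_2^a)$; consequently the identity $F=f\circ R_k$ holds for every such $k$. No step is genuinely difficult; the only care needed is to confine the dependence on $k$ to $R_k$, which works because $k$ enters only in the intermediate reduction $F(x,y,z)=g(x,k(K(y,z)))$, after which $f$ is manufactured from the $k$-free function $g$.
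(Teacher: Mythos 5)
Your proof is correct and follows essentially the same route as the paper's: the key step in both is the identity $K(k\circ K(y,z),a)=K(y,z)$ obtained from Fact~\ref{fact:11}, followed by substitution into the factored form of $F$. The only difference is cosmetic: the paper takes $f=G_2^a$ directly from the Problem~\ref{prob:1} representation, whereas you rebuild that same function (on $\ran(J)$) from the section $F(\cdot,\cdot,a)$ via a quasi-inverse of $J$, working with the Problem~\ref{prob:2} reformulation.
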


\begin{proof}
Let $F\in \mathcal{F}_{J,K}$. Then, there exist $G\colon U_J\times Z\to\ran(F)$ and $H\colon X\times U_K\to\ran(F)$ such that \eqref{eq:GAE-286we2} holds. Let $f=G_2^a$ and $k\in Q(K_2^a)$. For any $(y,z)\in Y\times Z$, by Fact~\ref{fact:11} we have
$$
K(y,z) ~=~ K_2^a\circ k\circ K(y,z) ~=~ K(k\circ K(y,z),a).
$$
For every $(x,y,z)\in X\times Y\times Z$, we then have
\begin{eqnarray*}
F(x,y,z) &=& H(x,K(y,z)) ~=~ H(x,K(k\circ K(y,z),a))\\
&=& G(J(x,k\circ K(y,z)),a) ~=~ f\circ R_k(x,y,z),
\end{eqnarray*}
which completes the proof.
\end{proof}

\begin{remark}
We observe that, although the quasi-inverse $k$ of $K_2^a$ need not be unique, the identity $F=f\circ R_k$ in Theorem~\ref{thm:main1} does not depend on the choice of this quasi-inverse.
\end{remark}

\begin{example}\label{ex:xz-y}
Let $F\colon\R^3\to\R$ be a function for which there exist $G,H\colon\R^2\to\R$ such that
\begin{equation}\label{eq:Ex4}
F(x,y,z) ~=~ G(x-y,z) ~=~ H(x,y-z),\qquad x,y,z\in\R.
\end{equation}
Searching for all possible such functions $F$ reduces to describing the functions in $\mathcal{F}_{J,K}$ that range in $\R$, where $J$ and $K$ are defined by $J(x,y)=x-y$ and $K(y,z)=y-z$. Since $K_2^0=\id_{\R}$, we have $\ran(K_2^0)=\R=\ran(K)$ and hence we can apply Theorem~\ref{thm:main1} with $a=0$. We then have $k=(K_2^0)^{-1}=\id_{\R}$ and $R_k(x,y,z)=x-y+z$. Therefore any function $F\in\mathcal{F}_{J,K}$ ranging in $\R$ is of the form
\begin{equation}\label{eq:Ex4a}
F(x,y,z)~=~f(x-y+z),\qquad x,y,z\in\R,
\end{equation}
for some $f\colon\R\to\R$. Conversely any such function clearly lies in $\mathcal{F}_{J,K}$. Therefore we necessarily have
$$
\{F\in\mathcal{F}_{J,K}\mid\ran(F)\subseteq\R\} ~=~ \{(x,y,z)\mapsto f(x-y+z)\mid f\colon\R\to\R\}.
$$
Finally, setting $y=0$ in \eqref{eq:Ex4} and \eqref{eq:Ex4a} we obtain $G(x,z)=H(x,-z)=f(x+z)$ for every $x,z\in\R$.\qed
\end{example}

\begin{example}\label{ex:join}
Assume that $(A,\vee)$ is a bounded join-semilattice, with $0$ as the least element and let $F\colon A^n\to\ran(F)$ be a function for which there exist $G,H\colon A^2\to\ran(F)$ such that
$$
F(x_1,\ldots,x_n) ~=~ G(x_1\vee\cdots\vee x_{n-1},x_n) ~=~ H(x_1,x_2\vee\cdots\vee x_n).
$$
The class of all possible functions $F$ satisfying this condition is nothing other than the set $\mathcal{F}_{J,K}$, where the functions $J,K\colon A^{n-1}\to A$ are defined by $J(x_1,\ldots,x_{n-1})=K(x_1,\ldots,x_{n-1})=x_1\vee\cdots\vee x_{n-1}$. Using Theorem~\ref{thm:main1} with $a=0$, we can easily see that
$$
\mathcal{F}_{J,K} ~=~ \{(x_1,\ldots,x_n)\mapsto f(x_1\vee\cdots\vee x_n)\mid \dom(f)=A\}.\qed
$$
\end{example}

The following theorem is the dual version of Theorem~\ref{thm:main1}. The proof is similar to that of Theorem~\ref{thm:main1} and hence is omitted. For any $b\in X$ we define the section $J_1^b\colon Y\to U_J$ of $J$ as the function $J_1^b(y)=J(b,y)$.

\begin{theorem}\label{thm:main2}
Assume that $\ran(J)=\ran(J_1^b)$ for some $b\in X$ and let $F\in \mathcal{F}_{J,K}$. Then there exists $g\colon U_K\to\ran(F)$ such that $F=g\circ S_j$ for every $j\in Q(J_1^b)$, where $S_j\colon X\times Y\times Z\to U_K$ is defined by
$$
S_j(x,y,z) ~=~ K(j\circ J(x,y),z).
$$
\end{theorem}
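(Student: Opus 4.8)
The plan is to mirror the proof of Theorem~\ref{thm:main1} with the roles of the two defining identities in \eqref{eq:GAE-286we2} exchanged, so that the section of $J$ in its first argument now plays the role that the section of $K$ in its second argument played before. Concretely, I would start from $F\in\mathcal{F}_{J,K}$, fix $G$ and $H$ witnessing \eqref{eq:GAE-286we2}, and set $g=H_1^b$, i.e. $g(u)=H(b,u)$ for $u\in U_K$, where $b\in X$ is the element with $\ran(J)=\ran(J_1^b)$. This $g$ maps $U_K$ into $\ran(F)$ by construction, as required.

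Next, for any $j\in Q(J_1^b)$ and any $(x,y)\in X\times Y$, I would invoke Fact~\ref{fact:11}: since $\ran(J_1^b)=\ran(J)\supseteq\ran\big((x,y)\mapsto J(x,y)\big)$, we get $J_1^b\circ j\circ J(x,y)=J(x,y)$, that is $J(x,y)=J(b,j\circ J(x,y))$. Using first the $H$-form of $F$ going backwards and then the $G$-form, I would chain the equalities: for every $(x,y,z)\in X\times Y\times Z$,
\begin{eqnarray*}
F(x,y,z) &=& G(J(x,y),z) ~=~ G(J(b,j\circ J(x,y)),z)\\
&=& F(b,j\circ J(x,y),z) ~=~ H(b,K(j\circ J(x,y),z))\\
&=& g\circ S_j(x,y,z),
\end{eqnarray*}
which is exactly the claimed identity $F=g\circ S_j$, valid for every $j\in Q(J_1^b)$.

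There is essentially no obstacle here: the only point requiring a moment's care is the bookkeeping about which composite equals which section of $G$ or $H$, and the verification that the range of $g$ lands in $\ran(F)$ — but both are immediate from the definitions, just as in Theorem~\ref{thm:main1}. The accompanying remark (independence of the identity from the choice of quasi-inverse $j$) also transfers verbatim, since $F=g\circ S_j$ is forced once $F$ is fixed and $S_j$ is pinned down on $\dom(F)$ by the relation $J(x,y)=J(b,j\circ J(x,y))$. For this reason the paper simply states the theorem and omits the proof, and I would do the same, or at most reproduce the three-line computation above.
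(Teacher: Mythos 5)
Your argument is correct and is exactly the dualization the paper has in mind when it says the proof of Theorem~\ref{thm:main2} is ``similar to that of Theorem~\ref{thm:main1} and hence is omitted'': you take $g=H_1^b$, use Fact~\ref{fact:11} to get $J(x,y)=J(b,j\circ J(x,y))$, and chain the two forms of $F$ in the same way the paper chains them with $f=G_2^a$. Nothing to add.
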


We observe that each of Theorems~\ref{thm:main1} and \ref{thm:main2} provides only necessary conditions for a function to be in $\mathcal{F}_{J,K}$. Examples~\ref{ex:xz-y} and \ref{ex:join} show that the use of only one of these theorems may sometimes be sufficient to derive a complete characterization of the class $\mathcal{F}_{J,K}$. The following example shows that using both theorems may somewhat simplify the quest for such a characterization.

\begin{example}\label{ex:prodsum}
Let $F\colon\R^3\to\ran(F)$ be a function for which there exist $G,H\colon\R^2\to\ran(F)$ such that
$$
F(x,y,z) ~=~ G(xy,z) ~=~ H(x,y+z),\qquad x,y,z\in\R.
$$
Using both Theorems~\ref{thm:main1} and \ref{thm:main2} with $J(x,y)=xy$, $K(y,z)=y+z$, $a=0$, and $b=1$, we obtain the expressions
$$
F(x,y,z) ~=~ f(xy+xz) ~=~ g(xy+z)
$$
for some functions $f,g\colon\R\to\ran(F)$. Setting $y=0$ and $z=1$ in these equations shows that $f=g$ must be a constant function. Therefore $\mathcal{F}_{J,K}$ consists of the class of constant functions. Note that using Theorem~\ref{thm:main1} only would have been sufficient here. Indeed, taking $a=0$ and then $a=1$ would lead to the identity $f(xy+xz)=f'(xy+xz-x)$ for some functions $f,f'\colon\R\to\ran(F)$, from which we would reach the same conclusion by setting $y=1$ and $z=0$.\qed
\end{example}

Example~\ref{ex:prodsum} may suggest that the set $\mathcal{F}_{J,K}$ reduces to the class of constant functions whenever the functions $J$ and $K$ do not coincide. To see that this is not true, just replace $J$ and $K$ in Example~\ref{ex:prodsum} with the functions $J(x,y)=y$ and $K(y,z)=y$, respectively. Any $F\in\mathcal{F}_{J,K}$ is then of the form $F(x,y,z)=f(y)$ for some function $f\colon\R\to\ran(F)$.

Interestingly, $\mathcal{F}_{J,K}$ may reduce to the class of constant functions even if $J$ and $K$ coincide. The following example illustrates this fact.

\begin{example}\label{ex:max1}
Let $A=\left[0,+\infty\right[$ and let $F\colon A^3\to\ran(F)$ be a function for which there exist $G,H\colon A^2\to\ran(F)$ such that
$$
F(x,y,z) ~=~ G(\max(1,x+y),z) ~=~ H(x,\max(1,y+z)),\qquad x,y,z\in A.
$$
Using both Theorems~\ref{thm:main1} and \ref{thm:main2} with $J(x,y)=K(x,y)=\max(1,x+y)$ and $a=b=0$ and choosing $j=k=\id_{\left[1,+\infty\right[}$, we obtain the expressions
$$
F(x,y,z) ~=~ f(\max(1+x,x+y+z)) ~=~ g(\max(1+z,x+y+z))
$$
for some functions $f,g\colon\left[1,+\infty\right[\to\ran(F)$. Setting first $x\in [0,1]$ and $y=z=0$ and then $x=y=0$ and $z\in [0,1]$ in these identities, we obtain that $f=g$ is constant on $[1,2]$. Then, setting $x\geqslant 1$ and $y=z=0$ and then $x=y=0$ and $z\geqslant 1$, we obtain that $f=g$ is constant on $\left[1,+\infty\right[$. Therefore $\mathcal{F}_{J,K}$ consists of the class of constant functions.\qed
\end{example}

The following two propositions give sufficient conditions on the functions $R_k$ and $S_j$ (as defined in Theorems~\ref{thm:main1} and \ref{thm:main2}) to obtain a characterization of the class $\mathcal{F}_{J,K}$.

\begin{proposition}\label{prop:CSRk}
Assume that $\ran(K)=\ran(K_2^a)$ for some $a\in Z$. Let $R_k\colon X\times Y\times Z\to U_J$ be defined as in Theorem~\ref{thm:main1}. If $R_k\in\mathcal{F}_{J,K}$, then $\mathcal{F}_{J,K}=\{f\circ R_k\mid \dom(f)=U_J\}$.
\end{proposition}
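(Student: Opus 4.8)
The plan is to prove the asserted set equality by establishing the two inclusions separately; neither should be difficult, since together they amount to little more than combining Theorem~\ref{thm:main1} with Fact~\ref{fact:fF}.

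First I would handle the inclusion $\mathcal{F}_{J,K}\subseteq\{f\circ R_k\mid\dom(f)=U_J\}$. This is immediate from Theorem~\ref{thm:main1}: since $\ran(K)=\ran(K_2^a)$ by hypothesis, any $F\in\mathcal{F}_{J,K}$ can be written as $F=f\circ R_k$ for some $f\colon U_J\to\ran(F)$, and such an $f$ has domain $U_J$. Notice that this half of the argument does not use the assumption $R_k\in\mathcal{F}_{J,K}$ at all.

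Next I would treat the reverse inclusion $\{f\circ R_k\mid\dom(f)=U_J\}\subseteq\mathcal{F}_{J,K}$, and this is where the hypothesis $R_k\in\mathcal{F}_{J,K}$ is used. Given a function $f$ with $\dom(f)=U_J$, the map $R_k$ takes values in $U_J$, so $\ran(R_k)\subseteq\dom(f)$ and $f\circ R_k$ makes sense; moreover $f\circ R_k=(f|_{\ran(R_k)})\circ R_k$. Since $R_k\in\mathcal{F}_{J,K}$ and $f|_{\ran(R_k)}$ is a function defined on $\ran(R_k)$, Fact~\ref{fact:fF} yields $f\circ R_k\in\mathcal{F}_{J,K}$, as desired.

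I do not expect any genuine obstacle here. The only subtlety worth flagging is the mismatch between ``$f$ defined on $U_J$'' in the statement and ``$f$ defined on $\ran(F)$'' in Fact~\ref{fact:fF}; this is resolved simply by passing to the restriction $f|_{\ran(R_k)}$ before invoking the fact, as above.
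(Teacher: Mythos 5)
Your proof is correct and follows exactly the paper's argument: the inclusion $\subseteq$ from Theorem~\ref{thm:main1} and the inclusion $\supseteq$ from the hypothesis $R_k\in\mathcal{F}_{J,K}$ together with Fact~\ref{fact:fF}. Your remark about restricting $f$ to $\ran(R_k)$ before invoking Fact~\ref{fact:fF} is a careful touch the paper leaves implicit.
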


\begin{proof}
Inclusion `$\subseteq$' follows from Theorem~\ref{thm:main1}. Inclusion `$\supseteq$' follows from both the hypothesis and Fact~\ref{fact:fF}.
\end{proof}

\begin{proposition}\label{prop:CSSj}
Assume that $\ran(J)=\ran(J_1^b)$ for some $b\in X$. Let $S_j\colon X\times Y\times Z\to U_K$ be defined as in Theorem~\ref{thm:main2}. If $S_j\in\mathcal{F}_{J,K}$, then $\mathcal{F}_{J,K}=\{g\circ S_j\mid \dom(g)=U_K\}$.
\end{proposition}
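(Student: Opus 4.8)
The plan is to mimic exactly the proof of Proposition~\ref{prop:CSRk}, invoking Theorem~\ref{thm:main2} in place of Theorem~\ref{thm:main1} and Fact~\ref{fact:fF} in place of itself. The two inclusions defining the set equality will be handled separately, as the statement of a double inclusion naturally suggests.

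For the inclusion $\mathcal{F}_{J,K}\subseteq\{g\circ S_j\mid \dom(g)=U_K\}$, I would simply apply Theorem~\ref{thm:main2}: since $\ran(J)=\ran(J_1^b)$ holds by hypothesis, every $F\in\mathcal{F}_{J,K}$ can be written as $F=g\circ S_j$ for some $g\colon U_K\to\ran(F)$ (and for every $j\in Q(J_1^b)$, though a single fixed $j$ suffices here). Since such a $g$ has domain $U_K$, it lies in the right-hand set, giving the inclusion.

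For the reverse inclusion $\{g\circ S_j\mid \dom(g)=U_K\}\subseteq\mathcal{F}_{J,K}$, I would use the extra hypothesis $S_j\in\mathcal{F}_{J,K}$ together with Fact~\ref{fact:fF}: if $g$ is any function with $\dom(g)=U_K\supseteq\ran(S_j)$, then $g\circ S_j=g|_{\ran(S_j)}\circ S_j$ is the composition of $S_j\in\mathcal{F}_{J,K}$ with a function defined on $\ran(S_j)$, hence $g\circ S_j\in\mathcal{F}_{J,K}$ by Fact~\ref{fact:fF}. (One might note in passing that $\ran(S_j)\subseteq U_K$, so restricting $g$ to $\ran(S_j)$ is harmless and the composition is unaffected.)

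There is essentially no obstacle: this is the formal dual of Proposition~\ref{prop:CSRk} and the proof is a two-line citation of Theorem~\ref{thm:main2} and Fact~\ref{fact:fF}. The only point requiring the tiniest care is that Fact~\ref{fact:fF} is phrased for functions $f$ defined \emph{on} $\ran(F)$, whereas here $g$ is defined on the possibly larger set $U_K$; but since $\ran(S_j)\subseteq U_K$, restricting $g$ to $\ran(S_j)$ leaves $g\circ S_j$ unchanged, so Fact~\ref{fact:fF} applies directly. Accordingly I would write:

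\begin{proof}
Inclusion `$\subseteq$' follows from Theorem~\ref{thm:main2}. Inclusion `$\supseteq$' follows from both the hypothesis and Fact~\ref{fact:fF}.
\end{proof}
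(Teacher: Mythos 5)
Your proof is correct and is exactly the argument the paper intends: it is the verbatim dual of the proof of Proposition~\ref{prop:CSRk} (the `$\subseteq$' inclusion from Theorem~\ref{thm:main2}, the `$\supseteq$' inclusion from the hypothesis $S_j\in\mathcal{F}_{J,K}$ together with Fact~\ref{fact:fF}), which is why the paper omits it. Your side remark that $g$ may be restricted to $\ran(S_j)$ without changing $g\circ S_j$ correctly disposes of the only notational mismatch with Fact~\ref{fact:fF}.
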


\begin{remark}
Finding necessary and sufficient conditions on functions $J$ and $K$ for $R_k$ (or $S_j$) to be in $\mathcal{F}_{J,K}$ remains an interesting problem.
\end{remark}

The following proposition states that if the functions $f\circ R_k$ and $g\circ S_j$ defined in Theorems~\ref{thm:main1} and \ref{thm:main2} are equal, then they belong to the class $\mathcal{F}_{J,K}$.

\begin{proposition}\label{prop:sd76f5}
Assume that $\ran(K)=\ran(K_2^a)$ and $\ran(J)=\ran(J_1^b)$ for some $a\in Z$ and $b\in X$. Let $R_k\colon X\times Y\times Z\to U_J$ and $S_j\colon X\times Y\times Z\to U_K$ be defined as in Theorems~\ref{thm:main1} and \ref{thm:main2}. If $f\circ R_k=g\circ S_j$ for some functions $f$ and $g$ such that $\dom(f)=U_J$ and $\dom(g)=U_K$, then $f\circ R_k\in\mathcal{F}_{J,K}$.
\end{proposition}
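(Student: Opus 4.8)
The plan is to verify directly that the function $F:=f\circ R_k$, which by hypothesis also equals $g\circ S_j$, satisfies the two implications defining the class $\mathcal{F}_{J,K}$ in Problem~\ref{prob:2}; since Problems~\ref{prob:1} and~\ref{prob:2} define the same class, this yields $f\circ R_k\in\mathcal{F}_{J,K}$. The guiding observation is that each of the two representations of $F$ takes care of exactly one of the two implications, and the assumption $f\circ R_k=g\circ S_j$ is precisely what permits us to use both representations for the same $F$.

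First I would use the representation $F=f\circ R_k$ with $R_k(x,y,z)=J(x,k\circ K(y,z))$, $k\in Q(K_2^a)$, to obtain the implication attached to $K$. Suppose $K(y,z)=K(y',z')$. Both values lie in $\ran(K)=\ran(K_2^a)$, hence in $\dom(k)$ (recall that $g\in Q(f)$ gives $\ran(f)\subseteq\dom(g)$), so that $k\circ K(y,z)=k\circ K(y',z')$ and therefore $R_k(x,y,z)=R_k(x,y',z')$ for every $x\in X$; applying $f$ yields $F(x,y,z)=F(x,y',z')$. Symmetrically, using the representation $F=g\circ S_j$ with $S_j(x,y,z)=K(j\circ J(x,y),z)$, $j\in Q(J_1^b)$: if $J(x,y)=J(x',y')$, then these values lie in $\ran(J)=\ran(J_1^b)\subseteq\dom(j)$, so $j\circ J(x,y)=j\circ J(x',y')$, hence $S_j(x,y,z)=S_j(x',y',z)$ and thus $F(x,y,z)=F(x',y',z)$ for every $z\in Z$. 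Both implications of Problem~\ref{prob:2} being established, the conclusion follows.

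There is no genuine obstacle here; the proof is short. The only point requiring attention is the domain/range bookkeeping for quasi-inverses: one has to record that the hypotheses $\ran(K)=\ran(K_2^a)$ and $\ran(J)=\ran(J_1^b)$ are exactly what makes $k\circ K(y,z)$ and $j\circ J(x,y)$ defined for all arguments (this is also what makes $R_k$ and $S_j$ well-defined on all of $X\times Y\times Z$ in the first place), and that consequently $R_k(x,y,z)$ depends on $(y,z)$ only through $K(y,z)$ while $S_j(x,y,z)$ depends on $(x,y)$ only through $J(x,y)$. Once this is in place, each implication is immediate.
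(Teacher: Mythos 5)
Your proof is correct and is essentially the paper's own argument seen through the equivalent formulation of Problem~\ref{prob:2}: the paper simply observes that $f\circ R_k(x,y,z)=f(J(x,k\circ K(y,z)))$ has the form $H(x,K(y,z))$ and $g\circ S_j(x,y,z)=g(K(j\circ J(x,y),z))$ has the form $G(J(x,y),z)$, so the hypothesis $f\circ R_k=g\circ S_j$ is exactly equation~\eqref{eq:GAE-286we}. Your verification of the two kernel implications rests on the same observation—that $R_k$ factors through $K(y,z)$ and $S_j$ through $J(x,y)$—so the two proofs coincide in substance.
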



\begin{proof}
Since the identity $f\circ R_k=g\circ S_j$ can be rewritten as condition \eqref{eq:GAE-286we} for some function $G$ defined on $U_J\times Z$ and some function $H$ defined on $X\times U_K$, the function $f\circ R_k$ is necessarily in $\mathcal{F}_{J,K}$.
\end{proof}

\begin{remark}\label{rem:f8s6d}
Proposition~\ref{prop:sd76f5} is particularly useful, when combined with any of the Propositions~\ref{prop:CSRk} and \ref{prop:CSSj}, if for instance $U_J=U_K$ and $f=g$ is the identity function (or a one-to-one function by Fact~\ref{fact:fF}). Example~\ref{ex:11} illustrates this observation.
\end{remark}

\begin{example}\label{ex:11}
Consider the class $\mathcal{F}_{J,K}$, where $J,K\colon [0,1]^2\to [0,1]$ are defined by $J(x,y)=K(x,y)=\frac{1}{2}\max(1,x+y)$. Consider also the functions $R_k$ and $S_j$ defined in Theorems~\ref{thm:main1} and \ref{thm:main2} by choosing the values $a=b=1$ and the functions $j,k\colon [\frac{1}{2},1]\to [0,1]$ defined by $j(x)=k(x)=2x-1$. Then we have
$$
R_k(x,y,z) ~=~ S_j(x,y,z) ~=~ \textstyle{\frac{1}{2}\max(1,x+y+z-1)}.
$$
By Proposition~\ref{prop:sd76f5} (and in view of Remark~\ref{rem:f8s6d}), we can immediately see that
$$
\mathcal{F}_{J,K} ~=~ \{(x,y,z)\mapsto \textstyle{f(\frac{1}{2}\max(1,x+y+z-1))}\mid \dom(f)=\textstyle{[\frac{1}{2},1]}\}.\qed
$$
\end{example}

We observe that Problem~\ref{prob:1} can also be generalized to functions $J$ and $K$ that are defined on subsets of $X\times Y$ and $Y\times Z$, respectively. Such a generalization can be useful for instance when the assumption of Theorem~\ref{thm:main1} is not satisfied (i.e., when $\ran(K)\neq\ran(K_2^a)$ for all $a\in Z$). For the interested reader we elaborate on this generalization in the Appendix.

%
%

\section{When the domain of $F$ is a Cartesian power}

We now particularize Problem~\ref{prob:1} to the case where $X=Z=A$ and $Y=A^{n-2}$ for some nonempty set $A$ and some integer $n\geqslant 3$. We then have $X\times Y\times Z=A^n$ and both functions $J$ and $K$ have $n-1$ arguments (like in Example~\ref{ex:join}).

Recall that the \emph{diagonal section} of a function $F\colon A^n\to\ran(F)$ is the function $\delta_F\colon A\to\ran(F)$ defined by $\delta_F(x)=F(x,\ldots,x)$. Also, a function $F\colon A^n\to A$ is said to be \emph{range-idempotent} if $\delta_F\circ F=F$. It is said to be \emph{idempotent} if $\delta_F=\id_A$.

In this section we provide conditions on $J$ and $K$ for each function $F$ in $\mathcal{F}_{J,K}$ to be expressible as $F=\delta_F\circ M$ for some function $M\colon A^n\to A$. Under idempotence and nondecreasing monotonicity (assuming $A$ is an ordered set), such a function $M$ is then called a \emph{Chisini mean associated with $F$} (see \cite{Mar10}). This observation could be useful in applications where aggregation functions, and especially mean functions, are considered.

Let us first consider an important but simple lemma.

\begin{lemma}\label{lemma:s68fd}
Let $R\colon A^n\to U$ be a function such that $\ran(R)=\ran(\delta_R)$ and consider the functions $f\colon U\to V$ and $F=f\circ R$. The following assertions hold.
\begin{enumerate}
\item[(a)] We have $F=\delta_F\circ r\circ R$ for every $r\in Q(\delta_R)$.

\item[(b)] If $F'=f'\circ R$ satisfies $\delta_F=\delta_{F'}$ for some $f'\colon U\to V$, then we have $F=F'$.

\item[(c)] If $F$ is idempotent, then $F=r\circ R$ for every $r\in Q(\delta_R)$.

\item[(d)] If $r\circ R$ is not idempotent for some $r\in Q(\delta_R)$, then $F$ is not idempotent.

\item[(e)] For every $r\in Q(\delta_R)$, the function $r\circ R$ is range-idempotent (i.e., $r\circ\delta_R\circ r\circ R=r\circ R$). It is idempotent if and only if $\delta_R$ is one-to-one.
\end{enumerate}
\end{lemma}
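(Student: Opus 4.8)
The plan is to reduce everything to a single identity, namely $\delta_R\circ r\circ R=R$ for $r\in Q(\delta_R)$, together with the trivial remark that diagonal sections commute with post-composition.

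First I would record that for any function $g$ that can be composed with $R$ on the left we have $\delta_{g\circ R}=g\circ\delta_R$, since $\delta_{g\circ R}(x)=g(R(x,\dots,x))=g(\delta_R(x))$; in particular $\delta_F=f\circ\delta_R$ and $\delta_{r\circ R}=r\circ\delta_R$. Then I would invoke Fact~\ref{fact:11} with $\delta_R$, $r$, $R$ in the roles of $f$, $g$, $h$: since $r\in Q(\delta_R)$ and $\ran(R)=\ran(\delta_R)$, it yields the key identity
$$
\delta_R\circ r\circ R ~=~ R .
$$
Every assertion of the lemma then follows by composing this identity on the left with a suitable function.

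For (a), post-compose with $f$ and use $\delta_F=f\circ\delta_R$ to get $\delta_F\circ r\circ R=f\circ\delta_R\circ r\circ R=f\circ R=F$. For (b), fix one $r\in Q(\delta_R)$; by (a), $F=\delta_F\circ r\circ R$ and $F'=\delta_{F'}\circ r\circ R$, so $\delta_F=\delta_{F'}$ forces $F=F'$. Statement (c) is (a) specialized to $\delta_F=\id_A$, and (d) is its contrapositive: an idempotent $F$ equals $r\circ R$ for every $r$ by (c), hence each $r\circ R$ is idempotent. For the first half of (e), post-composing the key identity with $r$ gives $r\circ\delta_R\circ r\circ R=r\circ R$, which is range-idempotence of $r\circ R$ in view of $\delta_{r\circ R}=r\circ\delta_R$.

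The only part requiring a little care is the equivalence in (e), namely that $r\circ R$ is idempotent if and only if $\delta_R$ is one-to-one, i.e.\ that $r\circ\delta_R=\id_A$ if and only if $\delta_R$ is injective. The direction ``$r\circ\delta_R=\id_A\Rightarrow\delta_R$ injective'' follows by applying $r$ to both sides of $\delta_R(x)=\delta_R(x')$. For the converse I would use both defining properties of the quasi-inverse relation recalled in the introduction: $\delta_R\circ r|_{\ran(\delta_R)}=\id_{\ran(\delta_R)}$ gives $\delta_R(r(\delta_R(x)))=\delta_R(x)$ for every $x\in A$, and injectivity of $\delta_R$ then lets me cancel the outermost $\delta_R$ to conclude $r(\delta_R(x))=x$, i.e.\ $r\circ\delta_R=\id_A$. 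I do not anticipate any genuine obstacle beyond keeping track of which composite is being simplified at each step.
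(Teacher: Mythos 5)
Your proof is correct and follows essentially the same route as the paper: both derive the key identity $\delta_R\circ r\circ R=R$ from Fact~\ref{fact:11} and obtain (a)--(e) by left-composing with suitable functions, with (b)--(d) as immediate consequences of (a) and (c). The only difference is that you spell out the second half of (e) (idempotence of $r\circ R$ iff $\delta_R$ is one-to-one), which the paper leaves implicit; your argument there is valid, including the use of $\ran(r)\subseteq\dom(\delta_R)$ to justify cancelling the outer $\delta_R$.
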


\begin{proof}
By Fact~\ref{fact:11} we have $\delta_R\circ r\circ R=R$, which proves assertion (e). We also derive the identities $F = f\circ R = f\circ \delta_R\circ r\circ R = \delta_F\circ r\circ R$, which prove assertion (a). Assertions (b) and (c) immediately follow from (a). Assertion (d) follows from (c).
\end{proof}

Whenever its assumptions are satisfied, Lemma~\ref{lemma:s68fd} provides interesting properties of function $F$. Assertions (a) and (c) give an explicit expression of $F$ in terms of its diagonal section. Assertion (b) shows that $F$ depends only on $\delta_F$ and $R$. Assertion (d) is nothing other than the contrapositive of assertion (c). Finally, assertion (e) reveals a surprising property of $r\circ R$.

\begin{example}\label{ex:SAM}
Let $R\colon\R^n\to\R$ be the sum function $R(x_1,\ldots,x_n)=\sum_{i=1}^nx_i$. Assertion (a) of Lemma~\ref{lemma:s68fd} shows that, for any function $f\colon\R\to\R$, the function $F=f\circ R$ can be written as
$$
F(x_1,\ldots,x_n) ~=~ \delta_F\Big(\frac{1}{n}\,\sum_{i=1}^nx_i\Big),\qquad x_1,\ldots,x_n\in\R.\qed
$$
\end{example}

For any function $F\colon A^n\to U$, any $k\in\{1,\ldots,n\}$, and any $a\in A$, we define the section $F_k^a\colon A^{n-1}\to U$ of $F$ by
$$
F_k^a(x_1,\ldots,x_{k-1},x_{k+1},\ldots,x_n) ~=~ F(x_1,\ldots,x_{k-1},a,x_{k+1},\ldots,x_n).
$$
Combining Theorem~\ref{thm:main1} with Lemma~\ref{lemma:s68fd}, we obtain the following result. First define $\mathcal{F}^A_{J,K}=\{F\in\mathcal{F}_{J,K}\mid \ran(F)\subseteq A\}$.

\begin{theorem}\label{thm:14}
Assume $\ran(K)=\ran(K_{n-1}^a)$ for some $a\in A$ and let $F\in\mathcal{F}_{J,K}$. Assume also that $\ran(R)=\ran(\delta_R)$, where $R$ is one of the functions $R_k$ ($k\in Q(K_{n-1}^a)$) defined in Theorem~\ref{thm:main1}. Then the assertions (a)--(e) of Lemma~\ref{lemma:s68fd} hold (where $U$ and $V$ are to be replaced with $U_J$ and $\ran(F)$, respectively). If $F\in\mathcal{F}^A_{J,K}$, then for every $r\in Q(\delta_R)$ we have $r\circ R\in\mathcal{F}^A_{J,K}$ if and only if
$$
\mathcal{F}^A_{J,K} ~=~ \{f\circ r\circ R\mid f\colon A\to A\}.
$$
\end{theorem}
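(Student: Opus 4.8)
The plan is to reduce Theorem~\ref{thm:14} to an application of Lemma~\ref{lemma:s68fd} together with Theorem~\ref{thm:main1} and Proposition~\ref{prop:CSRk}. First I would observe that the hypotheses are exactly those needed to invoke Theorem~\ref{thm:main1}: since $\ran(K)=\ran(K_{n-1}^a)$, for any $F\in\mathcal{F}_{J,K}$ there is a function $f\colon U_J\to\ran(F)$ with $F=f\circ R$ where $R=R_k$ for $k\in Q(K_{n-1}^a)$. The extra standing assumption $\ran(R)=\ran(\delta_R)$ is precisely the hypothesis of Lemma~\ref{lemma:s68fd} applied to this $R$ with $U$ replaced by $U_J$ and $V$ replaced by $\ran(F)$; thus assertions (a)--(e) hold verbatim. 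This handles the first claim of the theorem with essentially no work beyond citing the two earlier results.

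Next I would treat the equivalence. Fix $F\in\mathcal{F}^A_{J,K}$ and $r\in Q(\delta_R)$. For the direction ``$\Leftarrow$'': if $\mathcal{F}^A_{J,K}=\{f\circ r\circ R\mid f\colon A\to A\}$, then taking $f=\id_A$ shows $r\circ R\in\mathcal{F}^A_{J,K}$ immediately. For the direction ``$\Rightarrow$'': suppose $r\circ R\in\mathcal{F}^A_{J,K}$. The inclusion $\{f\circ r\circ R\mid f\colon A\to A\}\subseteq\mathcal{F}^A_{J,K}$ follows from Fact~\ref{fact:fF} (composing the member $r\circ R$ of $\mathcal{F}_{J,K}$ on the outside with any $f$ on $\ran(r\circ R)\subseteq A$) together with the range condition $\ran(f\circ r\circ R)\subseteq A$. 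For the reverse inclusion, take any $F\in\mathcal{F}^A_{J,K}$; by part (a) of Lemma~\ref{lemma:s68fd} (now available by the first part of the theorem) we have $F=\delta_F\circ r\circ R$, and since $\ran(F)\subseteq A$ the function $\delta_F\colon A\to A$ qualifies, so $F\in\{f\circ r\circ R\mid f\colon A\to A\}$.

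The only subtlety I anticipate is bookkeeping about which $r$ is used: part (a) of Lemma~\ref{lemma:s68fd} gives $F=\delta_F\circ r\circ R$ for \emph{every} $r\in Q(\delta_R)$, so the $r$ fixed at the start of the equivalence can be reused throughout, and no consistency issue arises. A second minor point is that the statement fixes a single quasi-inverse $r$ in the ``if and only if'', whereas the identity in part (a) is quasi-inverse-independent; I would note in passing that $r\circ R\in\mathcal{F}^A_{J,K}$ therefore either holds for all $r\in Q(\delta_R)$ or for none, which makes the quantifier ``for every $r$'' in the theorem statement harmless. I expect the main (and really the only) obstacle to be formulating the set-theoretic inclusions cleanly so that the role of Fact~\ref{fact:fF}, the range restriction to $A$, and Lemma~\ref{lemma:s68fd}(a) are each invoked in the right direction; the argument itself is short once the earlier machinery is in place.
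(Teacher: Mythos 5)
Your proposal is correct and follows essentially the same route as the paper's proof: invoke Theorem~\ref{thm:main1} to write $F=f\circ R$ so that Lemma~\ref{lemma:s68fd} applies, take $f=\id_A$ for sufficiency, use Fact~\ref{fact:fF} for the inclusion $\supseteq$, and use assertion (a) with $f=\delta_{F'}$ for the inclusion $\subseteq$. The extra remarks on the independence of the choice of $r$ are accurate but not needed.
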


\begin{proof}
By Theorem~\ref{thm:main1} there exists $f\colon U_J\to\ran(F)$ such that $F=f\circ R$ and hence Lemma~\ref{lemma:s68fd} applies. Let us now establish the last part of the theorem. To see that the condition is sufficient, just take $f=\id_A$. Let us show that it is necessary. The inclusion `$\supseteq$' follows from Fact~\ref{fact:fF}. To see that the inclusion `$\subseteq$' holds, let $F'\in\mathcal{F}^A_{J,K}$. Then $F'=\delta_{F'}\circ r\circ R$ (by assertion (a)) and hence we can take $f=\delta_{F'}$.
\end{proof}

The dual statement of Theorem~\ref{thm:14} can be derived immediately. We then have the following theorem.

\begin{theorem}\label{thm:15}
Assume that $\ran(J)=\ran(J_1^b)$ for some $b\in A$ and let $F\in\mathcal{F}_{J,K}$. Assume also that $\ran(S)=\ran(\delta_S)$, where $S$ is one of the functions $S_j$ ($j\in Q(J_1^b)$) defined in Theorem~\ref{thm:main2}. Then the assertions (a)--(e) of Lemma~\ref{lemma:s68fd} hold (where $R$, $r$, $U$, and $V$ are to be replaced with $S$, $s$, $U_K$ and $\ran(F)$, respectively). If $F\in\mathcal{F}^A_{J,K}$, then for every $s\in Q(\delta_S)$ we have $s\circ S\in\mathcal{F}^A_{J,K}$ if and only if
$$
\mathcal{F}^A_{J,K} ~=~ \{f\circ s\circ S\mid f\colon A\to A\}.
$$
\end{theorem}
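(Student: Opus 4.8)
The plan is to obtain Theorem~\ref{thm:15} as the formal dual of Theorem~\ref{thm:14}, exploiting the manifest symmetry of Problem~\ref{prob:1} between the pairs $(J,H)$ and $(K,G)$. Concretely, I would observe that swapping the roles of $X$ and $Z$ (here both equal to $A$), of $J$ and $K$, and of $G$ and $H$ transforms equation~\eqref{eq:GAE-286we2} into an equation of the same shape, and under this swap Theorem~\ref{thm:main1} becomes Theorem~\ref{thm:main2}, the function $R_k$ becomes $S_j$, the section $K_{n-1}^a$ becomes $J_1^b$ (note $J_1^b$ is the section in the \emph{first} variable, matching the fact that $X$ plays the role that $Z$ played), and the statement of Theorem~\ref{thm:14} becomes exactly that of Theorem~\ref{thm:15}. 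So the bulk of the proof is simply to invoke Theorem~\ref{thm:main2} in place of Theorem~\ref{thm:main1} and then rerun the argument of Theorem~\ref{thm:14} verbatim.

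In detail, the first step is: since $\ran(J)=\ran(J_1^b)$, Theorem~\ref{thm:main2} yields $g\colon U_K\to\ran(F)$ with $F=g\circ S_j$ for every $j\in Q(J_1^b)$; fix one such $j$ and write $S=S_j$. The second step is to apply Lemma~\ref{lemma:s68fd} with the substitution dictated by the statement, namely $R\rightsquigarrow S$, $r\rightsquigarrow s$, $U\rightsquigarrow U_K$, $V\rightsquigarrow\ran(F)$, and $f\rightsquigarrow g$; the hypothesis $\ran(S)=\ran(\delta_S)$ is exactly what Lemma~\ref{lemma:s68fd} requires, so assertions (a)--(e) hold as claimed. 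The third step handles the characterization of $\mathcal{F}^A_{J,K}$: for sufficiency take $f=\id_A$, which gives $s\circ S\in\mathcal{F}^A_{J,K}$ because $s\circ S$ is then displayed as a member of the asserted class; for necessity, the inclusion `$\supseteq$' is Fact~\ref{fact:fF} applied to $s\circ S\in\mathcal{F}^A_{J,K}$, while for `$\subseteq$' one takes any $F'\in\mathcal{F}^A_{J,K}$, applies Theorem~\ref{thm:main2} to get $F'=g'\circ S$ for a suitable $g'$, and then uses assertion (a) of Lemma~\ref{lemma:s68fd} to write $F'=\delta_{F'}\circ s\circ S$, exhibiting $F'$ with $f=\delta_{F'}$.

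There is essentially no genuine obstacle here; the only point requiring a moment's care is bookkeeping of \emph{which} section is used on the $J$-side. In Theorem~\ref{thm:14} the relevant section of $K$ is $K_{n-1}^a$, the section in the \emph{last} ($Z$-) variable, because $Z$ is the argument that $K$ shares with $G$ in $G(J(x,y),z)=H(x,K(y,z))$; dually, the argument that $J$ shares with $H$ is $x\in X$, the \emph{first} variable, so the relevant section of $J$ is $J_1^b$, exactly as stated, and this is consistent with Theorem~\ref{thm:main2}. Once this is checked, writing out the proof is mechanical, and I would simply say: ``The proof is obtained from that of Theorem~\ref{thm:14} by using Theorem~\ref{thm:main2} instead of Theorem~\ref{thm:main1} and making the substitutions indicated in the statement.'' If a self-contained proof is preferred over this remark, one reproduces the three steps above with the substituted symbols, which costs only a few lines.

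\begin{proof}
This is the dual of Theorem~\ref{thm:14}. Since $\ran(J)=\ran(J_1^b)$, Theorem~\ref{thm:main2} provides $g\colon U_K\to\ran(F)$ such that $F=g\circ S$, where $S$ denotes the chosen function $S_j$ with $j\in Q(J_1^b)$. As $\ran(S)=\ran(\delta_S)$ by hypothesis, Lemma~\ref{lemma:s68fd} applies with $R$, $r$, $U$, $V$, and $f$ replaced by $S$, $s$, $U_K$, $\ran(F)$, and $g$, respectively, which yields assertions (a)--(e). For the last part, suppose first that $\mathcal{F}^A_{J,K}=\{f\circ s\circ S\mid f\colon A\to A\}$; taking $f=\id_A$ shows $s\circ S\in\mathcal{F}^A_{J,K}$. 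Conversely, suppose $s\circ S\in\mathcal{F}^A_{J,K}$. The inclusion `$\supseteq$' then follows from Fact~\ref{fact:fF}. For `$\subseteq$', let $F'\in\mathcal{F}^A_{J,K}$; by Theorem~\ref{thm:main2} we may write $F'=g'\circ S$ for some $g'$, and then assertion~(a) of Lemma~\ref{lemma:s68fd} gives $F'=\delta_{F'}\circ s\circ S$, so $F'$ belongs to the class with $f=\delta_{F'}$.
\end{proof}
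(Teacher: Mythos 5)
Your proposal is correct and follows exactly the route the paper intends: the paper states that Theorem~\ref{thm:15} is ``derived immediately'' as the dual of Theorem~\ref{thm:14}, and your argument is precisely the dualized version of the paper's proof of Theorem~\ref{thm:14} (invoke Theorem~\ref{thm:main2} to get $F=g\circ S$, apply Lemma~\ref{lemma:s68fd} with the indicated substitutions, then handle the equivalence via $f=\id_A$, Fact~\ref{fact:fF}, and assertion~(a)). Your explicit note that $F'=g'\circ S$ must first be obtained from Theorem~\ref{thm:main2} before applying assertion~(a) is a small but welcome clarification of a step the paper leaves implicit.
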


\begin{example}\label{ex:17e}
Considering again Example~\ref{ex:11}, where
$$
R(x,y,x) ~=~ \textstyle{\frac{1}{2}{\,}\max(1,x+y+z-1)}
$$
on $[0,1]^3$, we clearly see that $\ran(R)=\ran(\delta_R)=[\frac{1}{2},1]$. Applying Theorem~\ref{thm:14}, with $r(x)=\frac{2x+1}{3}$ on $[\frac{1}{2},1]$ for instance, we obtain
$$
r\circ R(x,y,z) ~=~ \textstyle{\frac{1}{3}{\,}\max(2,x+y+z)}
$$
and for any $F\in\mathcal{F}_{J,K}$ we have $F(x,y,z) = \delta_F(\frac{1}{3}{\,}\max(2,x+y+z))$. By identifying the variables in the latter identity we then obtain $\delta_F(x)=\delta_F(\frac{1}{3}{\,}\max(2,3x))$, which shows that $\delta_F$ is constant on $[0,\frac{2}{3}]$.\qed
\end{example}

\begin{remark}
Let $A$ be a nonempty real interval possibly unbounded. Recall that if a function $F\colon A^n\to\R$ is nondecreasing in each variable and satisfies $\ran(F)=\ran(\delta_F)$, then there always exists a function $M\colon A^n\to A$ (called a \emph{Chisini mean}) that is idempotent and nondecreasing in each variable such that $F=\delta_F\circ M$ (see \cite{Mar10} for a constructive proof). For instance, considering again the functions in Example~\ref{ex:17e}, we can write $F(x,y,z)=\delta_{F}(\frac{x+y+z}{3})$, where $\delta_F$ is constant on $[0,\frac{2}{3}]$.
\end{remark}

We observe that the function $r\circ R$ is idempotent in Example~\ref{ex:SAM} while it is not in Example~\ref{ex:17e}. Actually, under the assumptions of Theorem~\ref{thm:14}, the function $r\circ R$ is idempotent whenever there exists $F\in\mathcal{F}_{J,K}$ such that $\delta_F$ is one-to-one. Indeed, we then have $r\circ R=\delta_F^{-1}\circ F$ and hence $\delta_{r\circ R}=\delta_F^{-1}\circ\delta_F=\id_A$, which shows that $r\circ R$ is idempotent. Clearly, the dual version of this fact can be derived by considering the assumptions of Theorem~\ref{thm:15}.

We also have the following result.

\begin{proposition}
Under the assumptions of both Theorems~\ref{thm:14} and \ref{thm:15}, the following assertions are equivalent.
\begin{enumerate}
\item[(i)] There exists $F\in\mathcal{F}_{J,K}$ such that $\delta_F$ is one-to-one.

\item[(ii)] $r\circ R=s\circ S$ is idempotent.

\item[(iii)] $r\circ R$ is idempotent and lies in $\mathcal{F}_{J,K}$.

\item[(iv)] $s\circ S$ is idempotent and lies in $\mathcal{F}_{J,K}$.
\end{enumerate}
\end{proposition}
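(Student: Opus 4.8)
The plan is to prove the cyclic chain $(i)\Rightarrow(ii)\Rightarrow(iii)\Rightarrow(i)$ together with the extra arc $(ii)\Rightarrow(iv)\Rightarrow(i)$; this yields all the equivalences. For $(i)\Rightarrow(ii)$, pick $F\in\mathcal{F}_{J,K}$ with $\delta_F$ one-to-one. Theorem~\ref{thm:14} (via assertion (a) of Lemma~\ref{lemma:s68fd}) gives $F=\delta_F\circ r\circ R$, and Theorem~\ref{thm:15} gives $F=\delta_F\circ s\circ S$. Restricting the first identity to the diagonal yields $\delta_F=\delta_F\circ\delta_{r\circ R}$, so injectivity of $\delta_F$ forces $\delta_{r\circ R}=\id_A$, i.e.\ $r\circ R$ is idempotent; the same argument applied to $S$ shows $s\circ S$ is idempotent. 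Finally $\delta_F\circ r\circ R=F=\delta_F\circ s\circ S$ together with injectivity of $\delta_F$ gives $r\circ R=s\circ S$, so $(ii)$ holds. (This just makes explicit the observation recorded immediately before the statement.)

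For $(ii)\Rightarrow(iii)$ and $(ii)\Rightarrow(iv)$, assume $r\circ R=s\circ S$ and that this common function is idempotent. The point is that, rewritten as $s\circ S$, the function $r\circ R$ is an instance of the generalized associativity equation. Indeed, after extending $r$ to $U_J$ and $s$ to $U_K$ arbitrarily — which changes neither $r\circ R$ nor $s\circ S$ because $\ran(R)=\ran(\delta_R)\subseteq\dom(r)$ and $\ran(S)=\ran(\delta_S)\subseteq\dom(s)$ — Proposition~\ref{prop:sd76f5} applies with $f=r$ and $g=s$ and gives $r\circ R\in\mathcal{F}_{J,K}$. Since $s\circ S=r\circ R$ and both are idempotent by hypothesis, assertions $(iii)$ and $(iv)$ follow at once. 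Conversely, for $(iii)\Rightarrow(i)$ (and symmetrically $(iv)\Rightarrow(i)$), if $r\circ R$ is idempotent and lies in $\mathcal{F}_{J,K}$, then $F:=r\circ R$ witnesses $(i)$, because $\delta_F=\delta_{r\circ R}=\id_A$ is one-to-one.

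The only step needing genuine thought is $(ii)\Rightarrow(iii)/(iv)$: one must recognize that the identity $r\circ R=s\circ S$ says precisely that $J(x,k\circ K(y,z))$ and $K(j\circ J(x,y),z)$ coincide after composition with $r$ (resp.\ $s$), thereby producing functions $G$ and $H$ that realize $r\circ R$ as an element of $\mathcal{F}_{J,K}$ through Proposition~\ref{prop:sd76f5}; the domain-extension remark is a routine technicality needed only to match the hypotheses of that proposition verbatim. All the remaining implications are immediate from Theorems~\ref{thm:14} and \ref{thm:15}, Lemma~\ref{lemma:s68fd}, and the definition of idempotence.
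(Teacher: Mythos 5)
Your proposal is correct and follows essentially the same route as the paper: (iii)/(iv)$\Rightarrow$(i) trivially via $\delta_{r\circ R}=\id_A$, (i)$\Rightarrow$(ii) by composing the identities $F=\delta_F\circ r\circ R=\delta_F\circ s\circ S$ with the injectivity of $\delta_F$ (the paper writes this as $r\circ R=\delta_F^{-1}\circ F=s\circ S$), and (ii)$\Rightarrow$(iii),(iv) via Proposition~\ref{prop:sd76f5}. Your extra remark about extending $r$ and $s$ to all of $U_J$ and $U_K$ to match the hypotheses of that proposition verbatim is a legitimate (and harmless) technical refinement that the paper leaves implicit.
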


\begin{proof}
Clearly (iii) or (iv) implies (i). Let us prove that (i) implies (ii). As observed above, we have $r\circ R=\delta_F^{-1}\circ F=s\circ S$ and hence the function $r\circ R=s\circ S$ is idempotent. Finally, (ii) implies (iii) and (iv) by Proposition~\ref{prop:sd76f5}.
\end{proof}

\begin{remark}
We observe that the proof of Lemma~\ref{lemma:s68fd} does not rely on the very concept of diagonal section. Actually, Lemma~\ref{lemma:s68fd} can be easily generalized as follows. Consider the functions $R\colon X\to U$, $f\colon U\to V$, $F=f\circ R$, and $\Pi\colon X\to X$ and assume that $\ran(R)=\ran(R\circ\Pi)$. Denote by $\ker(f)$ the \emph{kernel} of any function $f$, that is, the relation $\{(a,b)\in\dom(f)^2\mid f(a)=f(b)\}$. Then the following assertions hold.
\begin{enumerate}
\item[(a)] We have $F=F\circ\Pi\circ r\circ R$ for every $r\in Q(R\circ\Pi)$.

\item[(b)] If $F'=f'\circ R$ satisfies $F\circ\Pi=F'\circ\Pi$ for some $f'\colon U\to V$, then we have $F=F'$.

\item[(c)] If $F\circ\Pi=\Pi$, then $F=\Pi\circ r\circ R$ for every $r\in Q(R\circ\Pi)$.

\item[(d)] If $\Pi\circ r\circ R\circ\Pi\neq\Pi$ for some $r\in Q(R\circ\Pi)$, then we have $F\circ\Pi\neq\Pi$.

\item[(e)] If $\Pi\circ\Pi =\Pi$, then $T_r\circ\Pi\circ T_r=T_r$ for every $r\in Q(R\circ\Pi)$, where $T_r=\Pi\circ r\circ R$. In this case we have $T_r\circ\Pi=\Pi$ if and only if $\ker(T_r\circ\Pi)=\ker(\Pi)$.
\end{enumerate}
\end{remark}

\appendix
\section*{Appendix}

We consider a generalization of Problem~\ref{prob:1} in which the functions $J$ and $K$ are defined on subsets of $X\times Y$ and $Y\times Z$, respectively.

\begin{problem}\label{prob:3}
Given two functions $J\colon D_J\to U_J$ and $K\colon D_K\to U_K$, where $D_J\subseteq X\times Y$ and $D_K\subseteq Y\times Z$, determine the class $\mathcal{F}_{J,K}$ of functions $F\colon D_{J,K}\to\ran(F)$, where $D_{J,K}=\{(x,y,z)\mid (x,y)\in D_J~\text{and}~(y,z)\in D_K\}$, for which there exist $G\colon U_J\times Z\to\ran(F)$ and $H\colon X\times U_K\to\ran(F)$ such that
$$
F(x,y,z) ~=~ G(J(x,y),z) ~=~ H(x,K(y,z)),\qquad (x,y,z)\in D_{J,K}.
$$
\end{problem}

This generalization of Problem~\ref{prob:1} can be useful for instance when the assumption of Theorem~\ref{thm:main1} is not satisfied (i.e., when $\ran(K)\neq\ran(K_2^a)$ for all $a\in Z$). Indeed, it then may be possible to restrict the domain of $K$ to a subset $D_K\subseteq Y\times Z$ on which the assumption is satisfied. These situations are illustrated in the following results (whose proofs can be obtained by a simple adaptation of Theorems~\ref{thm:main1} and \ref{thm:main2}) and examples.

If a function $f$ is defined on $D\subseteq X\times Y$ and if $a\in Y$, then we denote by $f_2^a$ the function defined on $\{x\in X\mid (x,a)\in D\}$ by $f_2^a(x)=f(x,a)$. Similarly, if $b\in X$, then we denote by $f_1^b$ the function defined on $\{y\in Y\mid (b,y)\in D\}$ by $f_1^b(y)=f(b,y)$.

\begin{theorem}\label{thm:main1a}
Under the notation of Problem~\ref{prob:3}, assume that $\ran(K)=\ran(K_2^a)$ for some $a\in Z$ and let $F\in\mathcal{F}_{J,K}$. Then there exists $f\colon U_J\to\ran(F)$ such that for every $k\in Q(K_2^a)$ we have $F(x,y,z)=f\circ R_k(x,y,z)$ for every $(x,y,z)\in D_{J,K}$ such that $(x,k\circ K(y,z))\in D_J$, where $R_k\colon X\times Y\times Z\to U_J$ is defined by $R_k(x,y,z) = J(x,k\circ K(y,z))$.
\end{theorem}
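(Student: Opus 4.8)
The plan is to replay the proof of Theorem~\ref{thm:main1} almost verbatim, the only additional effort being the bookkeeping of the domains $D_J$, $D_K$, and $D_{J,K}$. So I would start from $F\in\mathcal{F}_{J,K}$ and fix functions $G\colon U_J\times Z\to\ran(F)$ and $H\colon X\times U_K\to\ran(F)$ witnessing membership, i.e., satisfying $F(x,y,z)=G(J(x,y),z)=H(x,K(y,z))$ for all $(x,y,z)\in D_{J,K}$. As in the original proof I would set $f=G_2^a$; since $G$ is defined on \emph{all} of $U_J\times Z$, this is a genuine function with $\dom(f)=U_J$, and it depends only on $G$ and $a$ and not on any later choice of $k$, which is exactly what the phrasing ``there exists $f$ such that for every $k\in Q(K_2^a)$'' requires.

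Next I would fix $k\in Q(K_2^a)$ and record the two domain facts needed both to make sense of $R_k$ and to apply Fact~\ref{fact:11}. First, by the symmetry of the quasi-inverse relation we have $\ran(K_2^a)\subseteq\dom(k)$, and since $\ran(K)=\ran(K_2^a)$ by hypothesis the composite $k\circ K$ is well defined on $D_K$. Second, again by symmetry, $\ran(k)\subseteq\dom(K_2^a)=\{y\in Y\mid (y,a)\in D_K\}$, so for every $(y,z)\in D_K$ the pair $(k\circ K(y,z),a)$ lies in $D_K$. Applying Fact~\ref{fact:11} with $K_2^a$, $k$, $K$ playing the roles of $f$, $g$, $h$ (legitimate precisely because $\ran(K)=\ran(K_2^a)$) then yields $K_2^a\circ k\circ K=K$, that is, $K(k\circ K(y,z),a)=K(y,z)$ for all $(y,z)\in D_K$.

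Finally I would take $(x,y,z)\in D_{J,K}$ with $(x,k\circ K(y,z))\in D_J$ and check that the triple $(x,k\circ K(y,z),a)$ lies in $D_{J,K}$: its first two coordinates form a point of $D_J$ by the standing hypothesis on $(x,y,z)$, and its last two coordinates form a point of $D_K$ by the observation of the previous paragraph. This licenses evaluating the defining identity of $F$ at that triple, after which the chain $F(x,y,z)=H(x,K(y,z))=H(x,K(k\circ K(y,z),a))=G(J(x,k\circ K(y,z)),a)=G_2^a(R_k(x,y,z))=f\circ R_k(x,y,z)$ is exactly the computation carried out in the proof of Theorem~\ref{thm:main1}. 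The main obstacle is genuinely just this domain tracking: once one has verified $(k\circ K(y,z),a)\in D_K$ from $k\in Q(K_2^a)$ and then $(x,k\circ K(y,z),a)\in D_{J,K}$ from the extra hypothesis $(x,k\circ K(y,z))\in D_J$, no idea beyond the one already used for Theorem~\ref{thm:main1} is needed, and the dual statement adapting Theorem~\ref{thm:main2} would follow by the symmetric argument.
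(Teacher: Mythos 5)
Your proof is correct and is precisely the ``simple adaptation'' of the proof of Theorem~\ref{thm:main1} that the paper itself invokes (it gives no separate argument for Theorem~\ref{thm:main1a}): the choice $f=G_2^a$, the use of Fact~\ref{fact:11} to get $K_2^a\circ k\circ K=K$, and the four-step chain of equalities are identical, and your domain checks --- $\ran(k)\subseteq\dom(K_2^a)$ forcing $(k\circ K(y,z),a)\in D_K$, plus the extra hypothesis $(x,k\circ K(y,z))\in D_J$ placing $(x,k\circ K(y,z),a)$ in $D_{J,K}$ --- are exactly the bookkeeping the generalization requires.
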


\begin{theorem}\label{thm:main2a}
Under the notation of Problem~\ref{prob:3}, assume that $\ran(J)=\ran(J_1^b)$ for some $b\in X$ and let $F\in\mathcal{F}_{J,K}$. Then there exists $g\colon U_K\to\ran(F)$ such that for every $j\in Q(J_1^b)$ we have $F(x,y,z)=g\circ S_j(x,y,z)$ for every $(x,y,z)\in D_{J,K}$ such that $(j\circ J(x,y),z)\in D_K$, where $S_j\colon X\times Y\times Z\to U_K$ is defined by $S_j(x,y,z) = K(j\circ J(x,y),z)$.
\end{theorem}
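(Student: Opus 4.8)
The plan is to mimic, in dual form, the proof of Theorem~\ref{thm:main1}, the only genuinely new ingredient being a careful account of the partial domains $D_J$, $D_K$, $D_{J,K}$ on which each equality is legitimate. First I would fix $F\in\mathcal{F}_{J,K}$ and choose $G\colon U_J\times Z\to\ran(F)$ and $H\colon X\times U_K\to\ran(F)$ with $F(x,y,z)=G(J(x,y),z)=H(x,K(y,z))$ for all $(x,y,z)\in D_{J,K}$. I would then set $g=H_1^b$ once and for all; since this choice does not involve $j$, the same $g$ will serve every $j\in Q(J_1^b)$, as the statement requires, and $g\colon U_K\to\ran(F)$ since $H$ is defined on all of $X\times U_K$.

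Before the main computation I would record two facts about a fixed $j\in Q(J_1^b)$. First, the quasi-inverse relation gives $\ran(J)=\ran(J_1^b)\subseteq\dom(j)$ and $\ran(j)\subseteq\dom(J_1^b)=\{y\in Y\mid (b,y)\in D_J\}$; hence for any $(x,y)\in D_J$ the element $w:=j\circ J(x,y)$ is well defined and satisfies $(b,w)\in D_J$. Second, Fact~\ref{fact:11} applied to the quasi-inverse pair $(J_1^b,j)$ and to $h=J$ (permissible because $\ran(J)=\ran(J_1^b)$) yields $J_1^b\circ j\circ J=J$, that is, $J(b,w)=J(x,y)$ for every $(x,y)\in D_J$.

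Then, for $(x,y,z)\in D_{J,K}$ with the extra property that $(j\circ J(x,y),z)=(w,z)\in D_K$, the two facts above give $(b,w)\in D_J$ and $(w,z)\in D_K$, so $(b,w,z)\in D_{J,K}$ and the defining identity of $F$ may be invoked at the point $(b,w,z)$. I would then chain
$$
F(x,y,z)=G(J(x,y),z)=G(J(b,w),z)=H(b,K(w,z))=H_1^b(K(w,z))=g\circ S_j(x,y,z),
$$
where the second equality is $J(b,w)=J(x,y)$, the third is the defining identity of $F$ at $(b,w,z)$, the fourth is the definition of $H_1^b=g$, and the last uses $S_j(x,y,z)=K(j\circ J(x,y),z)=K(w,z)$.

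What needs care — more a matter of bookkeeping than a deep obstacle — is making sure the substitution $x\mapsto b$, $y\mapsto w$ is legal in the functional equation for $F$: this is why one must verify both that $w$ always lands where $J(b,\cdot)$ is defined (guaranteed by the quasi-inverse properties of $j$, so that $(b,w)\in D_J$ comes for free) and that $(w,z)\in D_K$ (the single hypothesis that cannot be dispensed with, and that therefore appears explicitly in the statement). All the algebra is the verbatim dual of the proof of Theorem~\ref{thm:main1}, and, as in the Remark following that theorem, the resulting identity does not depend on the particular quasi-inverse $j$ chosen.
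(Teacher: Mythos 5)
Your proof is correct and is exactly the ``simple adaptation'' of the proofs of Theorems~\ref{thm:main1} and~\ref{thm:main2} that the paper invokes: take $g=H_1^b$, use Fact~\ref{fact:11} to get $J(b,j\circ J(x,y))=J(x,y)$, and apply the defining identity of $F$ at $(b,w,z)$, with the quasi-inverse properties supplying $(b,w)\in D_J$ and the explicit hypothesis supplying $(w,z)\in D_K$. The domain bookkeeping you add is precisely the content of the generalization, and it is handled correctly.
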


\begin{example}
Let $A$ be a real interval, let $J,K\colon A^2\to A$ be defined by $J(x,y)=K(x,y)=\frac{x+y}{2}$, and let $F\colon A^3\to\R$ be a function for which there exist $G,H\colon A^2\to\R$ such that
$$
F(x,y,z) ~=~ \textstyle{G(J(x,y),z) ~=~ H(x,K(y,z))},\qquad x,y,z\in A.
$$
If $A=\R$, then $\ran(K)=\ran(K_2^0)$ and by Theorem~\ref{thm:main1} the function $F$ is of the form $F(x,y,z)=f(x+y+z)$ for some $f\colon\R\to\R$. If $A=[0,1]$, then we have $\ran(K)\neq\ran(K_2^a)$ for every $a\in [0,1]$. Let us then use Theorem~\ref{thm:main1a} by considering the sets
$$
D_J~=~[0,1]^2\quad\text{and}\quad D_K~=~\{(y,z)\in [0,1]^2\mid y+z\leqslant 1\}.
$$
We then see that $\ran(K|_{D_K})=\ran((K|_{D_K})_2^0)$ and $D_{J,K}=[0,1]\times D_K$. Also, we may define $k\colon [0,\frac{1}{2}]\to [0,1]$ by $k(x)=2x$. By Theorem~\ref{thm:main1a}, for every $F\in\mathcal{F}_{J,K}$ there exists $f\colon [0,2]\to\R$ such that
$$
F(x,y,z)~=~f(x+y+z)\qquad \text{when}~y+z\leqslant 1.
$$
Similarly, considering the set
$$
D'_K ~=~ \{(y,z)\in [0,1]^2\mid y+z\geqslant 1\},
$$
we see that $\ran(K|_{D'_K})=\ran((K|_{D'_K})_2^1)$ and then there exists $f'\colon [1,3]\to\R$ such that
$$
F(x,y,z) ~=~ f'(x+y+z)\qquad \text{when}~y+z\geqslant 1.
$$
It follows that on the whole domain $[0,1]^3$ we have $F(x,y,z)=f(x+y+z)$ for some $f\colon [0,3]\to\R$.\qed
\end{example}

Note that the assumption $(x,k\circ K(y,z))\in D_J$ of Theorem~\ref{thm:main1a} need not be satisfied for every $(x,y,z)\in D_{J,K}$. The following example illustrates this case.

\begin{example}
Let $J$ and $K$ be the real functions defined on
$$
D ~=~ \{(x,y)\in \R^2 \mid y\neq x+1\}
$$
by
$$
J(x,y) ~=~ K(x,y) ~=~ \frac{x+y}{2(x-y+1)}
$$
and let $F\colon C\to\R$ be a function for which there exist $G,H\colon \R^2\to\R$ such that
\begin{equation}\label{eq:rzg5}
F(x,y,z) ~=~ \textstyle{G(J(x,y),z) ~=~ H(x,K(y,z))},\qquad (x,y,z)\in C,
\end{equation}
where $C=\{(x,y,z)\in\R^3\mid y\neq x+1,~z\neq y+1\}$.

Since $K(x,x)=x$, we have $\ran(K)=\R$. We also have $\ran(K_2^a)=\R\setminus\{\frac{1}{2}\}$ if $a\in\R\setminus\{\frac{1}{2}\}$ and $\ran(K_2^{1/2})=\{\frac{1}{2}\}$, which shows that $\ran(K)\neq \ran(K_2^a)$ for every $a\in \R$. We then can apply Theorem~\ref{thm:main1a} if we choose an appropriate restriction of function $K$. Let us use the notation of Problem~\ref{prob:3} and Theorem~\ref{thm:main1a} with the sets
$$
D_J ~=~ D\quad\text{and}\quad D_K ~=~ \textstyle{D\setminus\{(y,z)\in \R^2\mid z=\frac{1}{2}\}}.
$$
We then have $K_2^0(y)=\frac{y}{2(1+y)}$ for every $y\neq -1$ and $\ran(K)=\ran(K_2^0)=\R\setminus \{\frac{1}{2}\}$. The unique quasi-inverse of $K_2^0$ is the function $k\colon \R\setminus\{\frac{1}{2}\} \to \R\setminus\{-1\}$ defined by $k(t)=\frac{2t}{1-2t}$. Moreover we have
$$
D_{J,K} ~=~ \textstyle{\{(x,y,z)\in\R^3\mid y\neq x+1,~ z\neq y+1,~ z\neq \frac{1}{2}\}}
$$
and the condition $(x,k\circ K(y,z))\in D_J$ is then equivalent to $\frac{y+z}{1-2z}\neq x+1$. It follows from Theorem~\ref{thm:main1a} that for every function $F\colon D_{J,K} \to\ran(F)$ in $\mathcal{F}_{J,K}$ there is a function $f\colon \R\to\ran(F)$ such that
\begin{equation}\label{eqn:gh}
F(x,y,z)~=~f\Big(\frac{x+y+z-2xz}{2+2x-2y-6z-4xz}\Big),
\end{equation}
for every $(x,y,z)\in \R^3$ such that $y\neq x+1$, $z\neq y+1$, $z\neq \frac{1}{2}$, and $\frac{y+z}{1-2z}\neq x+1$.

We can apply Theorem~\ref{thm:main2a} similarly. Let us use the corresponding notation with the sets
$$
D_K ~=~ D\quad\text{and}\quad  D_J ~=~ \textstyle{D\setminus\{(x,y)\in\R^2\mid x=-\frac{1}{2}\}}.
$$
We then have $J_1^0(y)=\frac{y}{2(1-y)}$ for every $y\neq 1$ and $\ran(J)=\ran(J_1^0)=\R\setminus\{-\frac{1}{2}\}$. The unique quasi-inverse of $J_1^0$ is the function $j\colon \R\setminus\{-\frac{1}{2}\}\to \R\setminus\{1\}$ defined by $j(t)=\frac{2t}{1+2t}$. Moreover we have
$$
D_{J,K} ~=~ \textstyle{\{(x,y,z)\in\R^3\mid y\neq x+1,~ z\neq y+1,~ x\neq -\frac{1}{2}\}}
$$
and the condition $(j\circ J(x,y),z)\in D_K$ is then equivalent to $\frac{x+y}{1+2x}\neq z-1$. It follows from Theorem~\ref{thm:main2a} that for every function $F\colon D_{J,K} \to\ran(F)$ in $\mathcal{F}_{J,K}$ there is a function $g\colon \R\to\ran(F)$ such that
\begin{equation}\label{eqn:gh02}
F(x,y,z) ~=~ g\Big(\frac{x+y+z+2xz}{2+6x+2y-2z-4xz}\Big),
\end{equation}
for every $(x,y,z)\in \R^3$ such that $y\neq x+1$, $z\neq y+1$, $x\neq -\frac{1}{2}$, and $\frac{x+y}{1+2x}\neq z-1$.

Now, let us consider the function $F$ given in \eqref{eqn:gh} and \eqref{eqn:gh02} on the domain
$$
E ~=~  \textstyle{\{(x,y,z)\in\R^3\mid y\neq x+1,~ z\neq y+1,~ x\neq -\frac{1}{2},~ z\neq \frac{1}{2},~\frac{y+z}{1-2z}\neq x+1,~\frac{x+y}{1+2x}\neq z-1\}}.
$$
Substituting $x=-\frac{1}{3}(t+1)$, $y=t$, and $z=1$ in \eqref{eqn:gh} and \eqref{eqn:gh02}, we obtain
\begin{equation}\label{eqn:mpt01}
f\Big(-\frac{2t+2}{2t+5}\Big) ~=~ g(0), \qquad  t\in\R\setminus\textstyle{\{-\frac{5}{2},0,\frac{1}{2}\}}.
\end{equation}
Similarly, substituting $x=-\frac{1}{5}(t+2)$, $y=t$, and $z=2$, we obtain
\begin{equation}\label{eqn:mpt02}
f\Big(-\frac{4t+8}{2t+19}\Big) ~=~ g(0), \qquad  t\in\R\setminus\textstyle{\{-\frac{19}{2},\frac{1}{2},1\}}.
\end{equation}
It follows from conditions \eqref{eqn:mpt01} and \eqref{eqn:mpt02} that $f\colon \R\to\ran(F)$ is a constant map on $\R\setminus\{-\frac{1}{2}\}$. Using \eqref{eqn:gh} it is then easy to see that $F$ is constant on $E$.

Let us now consider equation \eqref{eq:rzg5} when $x=-\frac{1}{2}$. We have
$$
\textstyle{F(-\frac{1}{2},y,z) ~=~ \textstyle{G(-\frac{1}{2},z) ~=~ H(-\frac{1}{2},K(y,z))}}
$$
for every $(y,z)\in\R^2$ such that $y\neq\frac{1}{2}$ and $z\neq y+1$. It follows that the identity
$$
\textstyle{H(-\frac{1}{2},K(y,z)) ~=~ H(-\frac{1}{2},K(y',z))}
$$
holds for any $y,y'\in\R\setminus\{\frac{1}{2}\}$ and any $z\in\R\setminus\{y+1,y'+1\}$. Since $\ran(K_2^z)=\R\setminus\{\frac{1}{2}\}$ for every $z\neq\frac{1}{2}${\,}, we see that the function $t\mapsto H(-\frac{1}{2},t)$ is constant on $\R\setminus\{\frac{1}{2}\}$. Therefore $F$ is constant on $\{(-\frac{1}{2},y,z)\in\R^3\mid z\neq\frac{1}{2},~y\neq\frac{1}{2},~z\neq y+1\}$. We can show similarly that $F$ is constant on $\{(x,y,\frac{1}{2})\in\R^3\mid x\neq -\frac{1}{2},~y\neq -\frac{1}{2},~y\neq x+1\}$. Finally, $F$ is clearly constant on $\{(-\frac{1}{2},y,\frac{1}{2})\in\R^3\mid y\neq \pm\frac{1}{2}\}$. Now, in order to complete the resolution of this exercise, it would remain to know whether or not these constant values are related and to search for the behavior of $F$ when $\frac{y+z}{1-2z}= x+1$ or $\frac{x+y}{1+2x}= z-1$.\qed
\end{example}

\section*{Acknowledgments}

This research is supported by the internal research project R-AGR-0500-MRO3 of the University of Luxembourg.


\end{document}